\newcommand{\Ndash}{\nobreakdash--}
\newtheorem{theorem}{Theorem}
\newtheorem{lemma}{Lemma}
\newtheorem*{coro}{Corollary}
\newtheorem*{claim}{Claim}
\theoremstyle{remark}
\newtheorem*{obs}{Observation}
\DeclareRobustCommand{\qedif}{\hbox{}\nobreak\quad\eqno\hbox{\qedsymbol}}
\numberwithin{equation}{section}
\title{Inner Functions with Derivatives in the Weak Hardy Space\\
{\small May 10, 2012}}
\author{Joseph A.\ Cima\\ Artur Nicolau\footnote{The second author is supported in part by the grants MTM2011-24606 and 2009SGR420.}}
\date{}
\begin{document}

\maketitle

\begin{abstract}
It is proved that exponential Blaschke products are the inner
functions whose derivative is in the weak Hardy space. Exponential
Blaschke products are described in terms of their logarithmic means
and also in terms of the behavior of the derivatives of functions in
the corresponding model space.
\end{abstract}

\section{Introduction}\label{section1}

For $0<p<\infty$, let $H^p$ be the Hardy space of analytic functions~$f$ in the unit disc~$\mathbb{D}$ of the complex plane for which
$$
\|f\|^{p}_{p} = \sup_{r<1} \int_{0}^{2\pi} |f(re^{i\theta})|^{p} \,d\theta<\infty.
$$
Any function $f\in H^{p}$ has radial limits at almost every point of
the unit circle $\partial\mathbb{D}$, that is,
$f(e^{i\theta})=\lim\limits_{r\to 1} f(re^{i\theta})$ exists a.e.\
$e^{i\theta}\in \partial\mathbb{D}$. An inner function~$I$ is a
bounded analytic function in~$\mathbb{D}$ satisfying $|I
(e^{i\theta})|=1$ a.e.\ $e^{i\theta}\in\partial\mathbb{D}$. Any
inner function~$I$ can be decomposed as $I=\psi B S$ where $\psi$ is
a unimodular constant,
$$
B(z)=\prod_{n}\frac{\overline{z}_{n}}{|z_{n}|}
\frac{z_{n}-z}{1-\overline{z}_{n}z},\quad z\in\mathbb{D},
$$
is a Blaschke product and
$$
S(z)=\exp \left( -\int^{2\pi}_{0} \frac{e^{i\theta}+z}{e^{i\theta}-z}\,d\mu (\theta)\right),\quad z\in\mathbb{D},
$$
is a singular inner function. Here $\{z_{n}\}$ are the zeros of $I$;
$d\mu$ is a positive singular measure and we use the convention
$\overline{z}/|z|=1$ if $z=0$. An inner function which extends
continuously to the closed unit disc must be a finite Blaschke
product, that is a Blaschke product with finitely many zeros. Hence,
the only inner functions $I$ such that $I'\in H^{1}$ are the finite
Blaschke products. Many authors have studied the problem of
determining the Hardy space~$H^{p}$, $0<p<1$, to which the
derivative of an inner function belongs. See \cite{AC}, \cite{Ah},
\cite{Cu}, \cite{CS}, \cite{FM}, \cite{GGJ}, \cite{GPV}, \cite{P1},
\cite{P2}, \cite{P3}, \cite{Pe}. Ahern and Clark proved that if an
inner function~$I$ satisfies $I'\in H^{1/2}$ then $I$ must be a
Blaschke product (\cite{AC}). Let $B$ be a Blaschke product with
zeros~$\{z_{n}\}$. Protas proved that the condition
$$
\sum_{n} (1 - |z_{n}|)^{1-p}<\infty,
$$
implies that $B'\in H^{p}$ if $1/2<p<1$ (see \cite{P1}). The converse is not true but Ahern proved that for $1/2<p<1$, $B'\in H^{p}$ if and only if there exists $a\in \mathbb{D}$ such that
$$
\sum (1-|w_{n}|)^{1-p}<\infty,
$$
where the sum is taken over all $w_{n}\in\mathbb{D}$ with
$B(w_{n})=a$. See Theorem~6.2 of \cite{Ah}. The paper~\cite{Ah} has
other very nice results in this direction but no geometrical
description of the Blaschke products~$B$ such that $B'\in H^{p}$,
$1/2<p<1$ in terms of the distribution of its zeros, is known.

For $0<p<\infty$, let $H^{p}_{w}$ be the weak Hardy space formed by
those analytic functions~$f$ in the unit disc for which there exists
a constant~$C=C(f)>0$ such that
$$
|\{ e^{i\theta}: |f(re^{i\theta})|>\lambda\}| \le C \lambda^{-p}
$$
for any $0<r<1$ and any $\lambda >0$. Here $|E|$ denotes the length of the set~$E\subset \partial\mathbb{D}$. Given an analytic function~$f$ in the unit disc, consider the non-tangential maximal function defined as
$$
Mf(e^{i\theta})=\sup \{|f(z)|: |z-e^{i\theta}|\le\alpha (1-|z|)\}
$$
where $\alpha>1$ is fixed. A fundamental result by Hardy and
Littlewood (for $1\le p<\infty$) and by Burkholder, Gundy and
Silverstein (for $0<p<1$) states that if $f$ is analytic in
$\mathbb{D}$ then $f\in H^{p}$ if and only if $Mf\in L^{p}
(\partial\mathbb{D} )$. See  \cite[p.~111]{Ga}. For $0<p<\infty$,
let $L^{p}_{w}(\partial\mathbb{D})$ be the weak $L^{p}$ space of
measurable functions~$f$ defined on $\partial\mathbb{D}$ for which
there exists a constant~$C=C(f)>0$ such that
$$
|\{ e^{i\theta}: |f(e^{i\theta})|>\lambda\}| \le C \lambda^{-p}
$$
for any $\lambda>0$. An analytic function~$f$ in the unit disc
belongs to $H^{p}_{w}$ if and only if $Mf\in L^{p}_{w}
(\partial\mathbb{D})$. See Remark 1 of (\cite{Al1}) or Theorem 2.1
of (\cite{Al2}). Functions in~$H^{p}_{w}$ have radial limits at
almost every point and the boundary values are in
$L^{p}_{w}(\partial\mathbb{D})$. One can show that an analytic
function~$f$ in the Smirnov class whose boundary values are in
$L^{p}_{w}(\partial\mathbb{D})$ belongs to $H^{p}_{w}$ (see
\cite{Al1} or \cite[p.~36]{CMR}).

In this paper we consider the extreme case~$p=1$ in the results by
Protas and Ahern mentioned above and we will see that the Hardy
space~$H^{p}$ should be replaced by the weak Hardy
space~$H^{1}_{w}$. It is worth mentioning that the Blaschke
products~$B$ for which $B'\in H^{1}_{w}$ can be described in terms
of the distribution of their zeros, as it is stated in
Theorem~\ref{theo1}  below.

A Blaschke product~$B$ is called an exponential Blaschke product if
there exists a constant~$M=M(B)>0$ such that for any $k=1,2,\dotsc$
one has $\#\{z: B(z)=0,\, 2^{-k-1} \le 1-|z|\le 2^{-k}\}\le M$. Let
$\{z_{n}\}$ be the zeros of $B$ ordered so that $|z_{n}|\le
|z_{n+1}|$, $n=1,2,\dotsc$ . Then $B$ is an exponential Blaschke
product if and only if there exist constants $c=c(B)>0$ and
$\delta=\delta(B)<1$ such that $1-|z_{n}|\le c \delta^{n}$, for any
$n=1,2,\dotsc$ .

\begin{theorem}\label{theo1}
Let $B$ be a Blaschke product. Then $B'\in H^{1}_{w}$ if and only if $B$ is an exponential Blaschke product.
\end{theorem}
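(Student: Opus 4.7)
The plan is to reduce the theorem to an $L^1_w$ question about a sum of Poisson kernels via the boundary identity
$$|B'(e^{i\theta})| = \sum_n P_{z_n}(\theta), \qquad P_{z_n}(\theta) := \frac{1-|z_n|^2}{|e^{i\theta}-z_n|^2},$$
valid a.e.\ on $\partial\mathbb{D}$. To derive it I would take the logarithmic derivative $B'/B(z) = \sum_n (1-|z_n|^2)/((z-z_n)(1-\overline{z}_n z))$ and use the algebraic identity $(e^{i\theta}-z_n)(1-\overline{z}_n e^{i\theta}) = e^{i\theta}|e^{i\theta}-z_n|^2$, which forces every summand to share the common phase $e^{-i\theta}$ on the circle and so turns the triangle inequality into equality. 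The cone estimate $|1-\overline{z}_n z| \asymp |e^{i\theta}-z_n|$ for $z$ non-tangential to $e^{i\theta}$ additionally gives $MB'(e^{i\theta}) \asymp \sum_n P_{z_n}(\theta)$, so by the characterization of $H^1_w$ recalled in the introduction, $B' \in H^1_w$ if and only if $\sum_n P_{z_n} \in L^1_w(\partial\mathbb{D})$. Writing $N_k := \#(A_k \cap \{z_n\})$ with $A_k = \{z : 2^{-k-1} \le 1-|z| \le 2^{-k}\}$, the theorem becomes: this weak $L^1$ condition holds if and only if $\sup_k N_k < \infty$.

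\textbf{Sufficiency.} Assume $N_k \le M$ for all $k$. Each $P_{z_n}$ with $z_n \in A_k$ is a Poisson bump of height $\asymp 2^k$, width $\asymp 2^{-k}$, centered at $\phi_n := \arg z_n$. To estimate the level set $\{\sum_n P_{z_n} > \lambda\}$, I would split zeros by scale. For $z_n$ with $1-|z_n| \le 1/\lambda$ the arc $\{P_{z_n} > c\lambda\}$ has length $\lesssim \sqrt{(1-|z_n|)/\lambda}$, and under the exponential bound $1-|z_n| \le c_0\delta^n$ the sum of these lengths is a geometric series of total length $O(1/\lambda)$. The contribution of the remaining shallow zeros must be handled directly at the level of distribution functions, aggregating at most $M$ bumps per scale. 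The model radial case $z_n = 1-\delta^n$ (where $\sum_n P_{z_n}(\theta) \asymp 1/|\theta|$ by an explicit geometric computation) gives the sharp $L^1_w$ behavior $|\{\cdot > \lambda\}| = O(1/\lambda)$, and the general case follows by the same bump-by-bump argument.

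\textbf{Necessity.} Assume $\sum_n P_{z_n} \in L^1_w$ with constant $C$. Fix $k$, and for each $z_n \in A_k$ let $J_n$ be the arc of length $c \cdot 2^{-k}$ centered at $\arg z_n$; on $J_n$ one has $P_{z_n}(\theta) \ge c' 2^k$. Define the counting function $F(\theta) := \#\{n : z_n \in A_k,\ \theta \in J_n\}$. On $\{F \ge m\}$ we have $\sum_n P_{z_n}(\theta) \ge c' m 2^k$, so the weak $L^1$ hypothesis gives $|\{F \ge m\}| \le C''/(m 2^k)$ for every $m \ge 1$. Since $\int F\,d\theta = \sum_{z_n \in A_k} |J_n| \ge c N_k 2^{-k}$, the layer-cake identity yields
$$c N_k 2^{-k} \le \int F\, d\theta = \sum_{m=1}^{N_k} |\{F \ge m\}| \le \frac{C''}{2^k} \sum_{m=1}^{N_k} \frac{1}{m} \le \frac{C''(1+\log N_k)}{2^k}.$$
Cancelling $2^{-k}$ gives $N_k \le (C''/c)(1+\log N_k)$, which forces $N_k \le M$ for a universal constant $M$, so $B$ is an exponential Blaschke product.

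\textbf{Main obstacle.} The necessity direction is essentially the layer-cake argument above and is clean. The subtlety is entirely in the sufficiency direction: $\sum_n P_{z_n}$ is never in $L^1$ (its $L^1$ norm is $2\pi$ times the number of zeros), so one cannot use $L^1$-style bounds and must argue directly at the level of distribution functions, carefully tracking angular overlaps between Poisson bumps at different dyadic scales; a naive estimate costs a spurious $\log\lambda$ factor, and the radial model must serve as the guide for producing the sharp $O(1/\lambda)$ bound.
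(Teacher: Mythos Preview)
Your necessity argument via the counting function $F$ and the layer-cake formula is correct and is in fact cleaner than the paper's. The paper argues by contradiction: assuming $N_k\to\infty$ along a subsequence, it uses the weak-$L^1$ bound to confine the arcs $J_n$ to a set of measure $\lesssim 2^{-k}$, then pigeonholes to find a point covered by many $J_n$'s, and finally shows $|B'|$ is too large on a full arc near that point. Your inequality $N_k \lesssim 1+\log N_k$ reaches the same conclusion in one stroke and gives an explicit bound on $N_k$ in terms of the weak-$L^1$ constant.

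The sufficiency sketch, however, has a genuine gap. Bounding the individual level sets $\{P_{z_n}>c\lambda\}$ by arcs of length $\sqrt{(1-|z_n|)/\lambda}$ and summing is not enough: outside the union of these arcs each term satisfies $P_{z_n}\le c\lambda$, but there is no control on the \emph{sum}, and with infinitely many deep zeros the accumulated tail can still exceed $\lambda$. The radial model is misleading here because in that case all the bumps share a center, so the tails automatically organize into a single $1/|\theta|$ profile; in general the zeros are angularly spread and you must force summability of the tails by hand. The paper does this with a dedicated lemma: for the deep zeros $w_k$ (those with $1-|w_k|\le M/\lambda$, indexed in order of increasing modulus) it chooses enlarged arcs of length $2^{n_k}(1-|w_k|)$ with
\[
2^{2n_k}(1-|w_k|)\ \asymp\ \frac{k^2}{\lambda},
\]
so that outside the $k$-th arc one has $P_{w_k}\lesssim \lambda/k^2$ and the tails sum to $O(\lambda)$; the total length $\sum_k 2^{n_k}(1-|w_k|)\asymp \lambda^{-1/2}\sum_k k(1-|w_k|)^{1/2}$ is still $O(1/\lambda)$ because the exponential decay of $1-|w_k|$ beats the polynomial factor $k$. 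The shallow zeros ($1-|z_k|>M/\lambda$) are dispatched separately by the pointwise bound $\sum P_{z_k}\le 2\sum (1-|z_k|)^{-1}\le \lambda/2$, using that there are $O(1)$ zeros per dyadic scale. Your proposal mentions both pieces but supplies neither; the $k^2$ weighting (or an equivalent device) is the missing idea that kills the spurious logarithm you anticipated.
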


Let $I$ be an inner function. Frostman
 (see~\cite{Fr}) tell us that there exists a set $E=E(I)$ of
 logarithmic capacity zero such that for any $a \in \mathbb{D} \setminus E$, the function  $(I-a)/(1 - \overline{a}I)$ is
 a Blaschke product. A Blaschke product $B$ is called indestructible
 if $E(B)= \emptyset$. This terminology was introduced in ~\cite{Ml} and further results can be found in
  ~\cite{Bi}, ~\cite{Mo}, ~\cite{Ro}. It is worth mentioning that no geometric
 description of indestructible Blaschke products in terms of the
 location of its zeros, is known. As a consequence of Theorem 1 we
 obtain that exponential Blaschke products are indestructible.

 \begin{coro}\label{cor1}
Let $B$ be an exponential Blaschke product. Then for any  $a \in
\mathbb{D}$ the function $ (I-a) /(1 - \overline{a}I) $ is also an
exponential Blaschke product.
\end{coro}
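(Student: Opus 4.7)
The plan is to use Theorem 1 as a black box in both directions. First I observe that for any $a\in\mathbb{D}$ and any analytic $B$ in $\mathbb{D}$ with $|B|\le 1$, differentiating the Möbius post-composition gives
$$
\frac{d}{dz}\!\left(\frac{B(z)-a}{1-\overline{a}B(z)}\right)=\frac{(1-|a|^{2})B'(z)}{(1-\overline{a}B(z))^{2}},
$$
and since $|1-\overline{a}B(z)|\ge 1-|a|$ on $\mathbb{D}$, the right-hand side is dominated pointwise by $C(a)|B'(z)|$ where $C(a)=(1+|a|)/(1-|a|)$. Hence the non-tangential maximal function of the derivative of $(B-a)/(1-\overline{a}B)$ is dominated by $C(a)\,M(B')$, so by the definition of $H^{1}_{w}$ the derivative of $(B-a)/(1-\overline{a}B)$ lies in $H^{1}_{w}$ as soon as $B'\in H^{1}_{w}$, which is guaranteed by Theorem~\ref{theo1} under our hypothesis.

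The next step is to rule out any singular inner factor in $(B-a)/(1-\overline{a}B)$. Since $B$ is inner, the function $(B-a)/(1-\overline{a}B)$ is inner; it suffices to show it is a Blaschke product. For this I invoke the Ahern--Clark theorem mentioned in the introduction: any inner function whose derivative lies in $H^{1/2}$ is a Blaschke product. The routine layer-cake inequality
$$
\int_{0}^{2\pi}|f(e^{i\theta})|^{1/2}\,d\theta=\frac{1}{2}\int_{0}^{\infty}\lambda^{-1/2}\bigl|\{|f|>\lambda\}\bigr|\,d\lambda
$$
together with the trivial bound $|\{|f|>\lambda\}|\le 2\pi$ for small $\lambda$ and the weak bound $|\{|f|>\lambda\}|\le C\lambda^{-1}$ for large $\lambda$ yields the embedding $H^{1}_{w}\subset H^{1/2}$. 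Applying this to the derivative bounded in the first paragraph, we conclude that $(B-a)/(1-\overline{a}B)$ is a Blaschke product for every $a\in\mathbb{D}$.

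Combining the two steps, $(B-a)/(1-\overline{a}B)$ is a Blaschke product whose derivative belongs to $H^{1}_{w}$, and Theorem~\ref{theo1}, used now in the converse direction, delivers the conclusion that it is an exponential Blaschke product. The Frostman exceptional set $E(B)$ is therefore empty, so $B$ is also indestructible in the sense recalled above.

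The only genuine issue is the qualitative statement that no singular factor appears; the quantitative exponential decay of the zeros of $(B-a)/(1-\overline{a}B)$ is not being proved by hand but is packaged inside the ``only if'' direction of Theorem~\ref{theo1}. The potential pitfall to watch out for is the inclusion $H^{1}_{w}\subset H^{1/2}$, which is standard but should be stated explicitly so that Ahern--Clark can be applied; everything else is straightforward.
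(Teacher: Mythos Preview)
Your argument is correct and is exactly the route the authors have in mind: the paper does not give a separate proof of the Corollary, merely stating that it is ``a consequence of Theorem~1,'' and your proposal fills in the details in the natural way (the pointwise bound on the derivative of the M\"obius transform, the inclusion $H^{1}_{w}\subset H^{1/2}$, Ahern--Clark to kill any singular factor, and then Theorem~\ref{theo1} in the reverse direction). Nothing further is needed.
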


Another classical result of Frostman (see ~\cite{Fr}) tells that an
inner function $B$ is a Blaschke product if and only if
$$
 \lim_{r\to 1} \int^{2\pi}_{0} \log |B
(re^{i\theta})|\,d\theta=0.
$$
Exponential Blaschke products can be described in similar terms. For $0<r<1$, consider
$$
T(r)=T(B)(r)=\frac{1}{\log r} \frac{1}{2\pi} \int^{2\pi}_{0} \log
|B(re^{i\theta})|\,d\theta, \quad 0<r<1
$$
It is easy to show that finite Blaschke products are precisely the
inner functions for which $\sup\{T(r): r\in [0,1]\}<\infty$.
Exponential Blaschke products are the inner functions for which the
corresponding $T(r)$ has  a moderate growth, as it is stated in next
result.

\begin{theorem}\label{theo2}
Let $B$ be a Blaschke product. Then $B$ is an exponential Blaschke product if and only if there exists a constant~$M=M(B)>0$
such that $|T(1-2^{-N-1})-T(1-2^{-N})|\le M$ for any $N=1,2,\dotsc$ .
\end{theorem}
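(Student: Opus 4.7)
The plan is to let Jensen's formula do all the work. Starting from the standard identity
$$\frac{1}{2\pi}\int_0^{2\pi}\log|B(re^{i\theta})|\,d\theta = \sum_{|z_n|\ge r}\log|z_n| + n(r)\log r,$$
where $n(r)=\#\{n:|z_n|<r\}$, dividing by $\log r$ gives
$$T(r) = n(r) + \frac{1}{\log r}\sum_{|z_n|\ge r}\log|z_n|.$$
I will then evaluate this identity at the dyadic points $r_N = 1-2^{-N}$ and match the result against the dyadic counting function $N_k=\#\{n:2^{-k-1}\le 1-|z_n|<2^{-k}\}$ of the zeros, whose uniform boundedness is exactly the exponential condition.

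The asymptotic step uses the Taylor expansions $\log|z_n| = -(1-|z_n|)(1+O(1-|z_n|))$ and $\log r_N = -2^{-N}(1+O(2^{-N}))$. For $|z_n|\ge r_N$ (so $1-|z_n|\le 2^{-N}$) these combine to give $\log|z_n|/\log r_N = 2^N(1-|z_n|) + O(1-|z_n|)$, and summing, with the Blaschke condition $\sum_n(1-|z_n|)<\infty$ absorbing the errors, yields, on setting $\tilde{\sigma}_k := 2^k\sum_{z_n\in A_k}(1-|z_n|)\in [N_k/2,N_k]$,
$$T(r_N) = \sum_{k<N}N_k + \sum_{k\ge N}2^{N-k}\tilde{\sigma}_k + O(1).$$
A telescoping between $N$ and $N+1$ then simplifies to
$$T(r_{N+1}) - T(r_N) = (N_N - \tilde{\sigma}_N) + \sum_{j\ge 1} 2^{-j}\tilde{\sigma}_{N+j} + O(1),$$
in which every explicit summand is non-negative (since $\tilde{\sigma}_N \le N_N$).

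Both directions of the theorem are then immediate from this formula and the two-sided estimate $\tilde{\sigma}_k \asymp N_k$. If $\sup_k N_k = M$, each term on the right is at most $M$, so the geometric series is bounded by $2M$ and the differences are uniformly bounded. Conversely, if the differences are uniformly bounded by $M'$, the non-negativity of each explicit summand forces in particular $2^{-1}\tilde{\sigma}_{N+1} \le M' + O(1)$ for every $N$, whence $N_{N+1}\le 4M'+O(1)$ uniformly in $N$; the finitely many small-$k$ counts $N_0, N_1, \dots$ are controlled trivially, giving $\sup_k N_k < \infty$.

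The main technical obstacle is controlling the $O(1)$ remainder uniformly in $N$. This reduces to checking that the total contribution of the second-order corrections in the Taylor expansions above is bounded by a constant times $\sum_n(1-|z_n|)$, which is the single place where Blaschke summability is invoked. The rest of the argument is dyadic bookkeeping.
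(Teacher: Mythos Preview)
Your proof is correct and starts from the same Jensen-formula decomposition $T(r)=n(r)+\frac{1}{\log r}\sum_{|z_n|\ge r}\log|z_n|$ that the paper records as~\eqref{eq3.1}. The execution, however, is organized differently. You Taylor-expand $\log|z_n|$ and $\log r_N$ to arrive at a single dyadic identity
\[
T(r_{N+1})-T(r_N)=(N_N-\tilde\sigma_N)+\sum_{j\ge1}2^{-j}\tilde\sigma_{N+j}+O(1)
\]
with every explicit term nonnegative, so that both implications drop out in one line. The paper never Taylor-expands: for the forward direction it bounds the tail $\frac{1}{\log r}\sum_{|z_n|\ge r}\log|z_n|$ directly by $CK/(1-\alpha)$ using the exponential condition, and for the converse it introduces auxiliary radii $r_1,r_2$ with $\log r_1^{-1}=\tfrac12\log r^{-1}$ and $\log r_2^{-1}=\tfrac18\log r^{-1}$, shows $T(r_1)-T(r)$ dominates the tail sum, and then that the tail sum controls $\#\{z_n:r_1\le|z_n|\le r_2\}$. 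Your packaging is cleaner and makes the two-sidedness transparent; the paper's route is slightly more elementary in that it never has to justify a uniform $O(1)$ remainder (no asymptotic expansion is performed), at the price of the somewhat ad hoc choice of $r_1,r_2$ in the converse.
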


Given an inner function~$I$, let $(IH^{2})^{\perp}$ be the
orthogonal complement of the subspace~$IH^{2}$ in the Hardy
space~$H^{2}$. For $2/3<p<1$, W.~Cohn proved that $I'\in H^{p}$ if
and only if $f'\in H^{2p/(p+2)}$ for any $f\in (IH^{2})^{\perp}$.
See~\cite{Co}. We have the following version in the extreme
case~$p=1$.

\begin{theorem}\label{theo3}
Let $B$ be a Blaschke product. Then $B'\in H^{1}_{w}$ if and only if
there exists a constant~$C=C(B)>0$ such that for any $f\in
(BH^{2})^{\perp}$ and any $0<r<1$, one has
\begin{equation}\label{eq1.1}
|\{ e^{i\theta}: |f' (r e^{i\theta})| >\lambda \|f\|_{2}\}| \le C
{\lambda}^{-2/3}
\end{equation}
for any $\lambda>0$.
\end{theorem}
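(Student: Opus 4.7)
The strategy is to use Theorem~1 as a bridge: since $B'\in H^1_w$ is equivalent to $B$ being an exponential Blaschke product, it suffices to show that the weak-type estimate \eqref{eq1.1} is also equivalent to this geometric condition. Both directions will be analyzed through the normalized Cauchy kernels $k_n(z)=(1-|z_n|^2)^{1/2}/(1-\bar z_n z)$, which lie in $(BH^2)^{\perp}$ (since $B(z_n)=0$) and have unit norm in~$H^2$.

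\emph{Sufficiency ($B$ exponential implies \eqref{eq1.1}).} When $1-|z_n|\le c\delta^n$, the zeros form a finite union of interpolating sequences and $\{k_n\}$ constitutes a Riesz basis of $(BH^2)^{\perp}$, so every $f\in(BH^2)^{\perp}$ decomposes as $f=\sum_n a_n k_n$ with $\sum|a_n|^2\asymp\|f\|_2^2$. Term-by-term differentiation gives
$$f'(z)=\sum_n a_n\,\bar z_n\,\frac{(1-|z_n|^2)^{1/2}}{(1-\bar z_n z)^{2}}.$$
I would group the indices by dyadic annulus $J_k=\{n:2^{-k-1}\le 1-|z_n|<2^{-k}\}$, of cardinality at most $M$ by the exponential hypothesis, and estimate the distribution function of $f'(re^{i\theta})$ scale by scale. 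At each scale, the sum of the bounded number of second-order Cauchy kernels is pointwise comparable to a profile of peak height $\asymp 2^{3k/2}$ concentrated on an arc of length $\asymp 2^{-k}$. The exponential separation of scales lets one sum the resulting level-set measures geometrically, bypassing the lack of a subadditive triangle inequality in the quasi-Banach space $H^{2/3}_w$, and produces the required $\lambda^{-2/3}$ bound.

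\emph{Necessity (\eqref{eq1.1} implies $B$ exponential).} Given $K$ zeros of $B$ lying in a dyadic annulus $\{z:1-2^{-N}\le|z|<1-2^{-N-1}\}$, I would first extract a pseudo-hyperbolically $\eta$-separated subsequence $w_1,\dots,w_L$, with $L\asymp K$ after a supplementary argument controlling possible clustering. For random signs $\epsilon_j\in\{\pm 1\}$, set $f=\sum_{j=1}^L\epsilon_j k_j\in(BH^2)^{\perp}$; Carleson interpolation gives $\|f\|_2^2\asymp L$ deterministically, while a second-moment calculation shows $\mathbb{E}|f'(w_\ell)|^2\gtrsim 2^{3N}$ at each $w_\ell$. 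A pigeonhole argument then produces a choice of signs for which $|f'(re^{i\theta})|\gtrsim 2^{3N/2}$ on a set of measure $\gtrsim L\cdot 2^{-N}$ at the radius $r\asymp 1-2^{-N}$. Substituting into \eqref{eq1.1} at $\lambda\asymp 2^{3N/2}/\sqrt{L}$ yields $L\cdot 2^{-N}\le CL^{1/3}\cdot 2^{-N}$, whence $L\le C^{3/2}$ uniformly in $N$, so $K\le M$.

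\emph{Main obstacle.} The hardest part is sufficiency: $H^{2/3}_w$ is only a quasi-Banach space whose defect from the triangle inequality grows with the number of summands, so the Riesz basis expansion cannot be summed naively. The exponential decay $1-|z_n|\le c\delta^n$ is the essential geometric input, because it compresses all contributions at comparable scales into bounded-cardinality blocks and lets the scale-by-scale level-set estimates be summed as a convergent geometric series rather than via quasi-norm subadditivity. A secondary difficulty on the necessity side is that angular or hyperbolic clustering of zeros makes the kernels $k_j$ strongly correlated, and the preliminary hyperbolic extraction together with the randomization of signs is needed to guarantee that $|f'|$ attains the required lower bound on a genuinely large set.
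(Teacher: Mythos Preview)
Your sufficiency direction is broadly aligned with the paper's and should go through. Both arguments rest on the Riesz-basis expansion $f=\sum a_n k_n$ coming from the interpolating structure of the zeros, and both exploit the exponential decay $1-|z_n|\le c\delta^n$ to sum level-set contributions. The paper's execution is a bit more direct than your scale-by-scale scheme: it applies Cauchy--Schwarz once to get
\[
|f'(e^{i\theta})|\le C\|f\|_2\Bigl(\sum_k \frac{1-|z_k|}{|1-\bar z_k e^{i\theta}|^4}\Bigr)^{1/2},
\]
reducing everything to a weak-type estimate for a single deterministic sum, which is then handled by choosing a threshold index $k_0$ and covering arcs $N_kI_k$ in the spirit of Lemma~1. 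This sidesteps the quasi-Banach issue you flag.

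The necessity direction, however, has a real gap. Your extraction of an $\eta$-separated subfamily $w_1,\dots,w_L$ with $L\asymp K$ cannot be justified in general: if all $K$ zeros in the annulus lie in a single hyperbolic ball of radius $\eta$, any $\eta$-separated subfamily has bounded size, and your test function $f=\sum_j\epsilon_j k_j$ is essentially a scalar multiple of one kernel, so $|f'|/\|f\|_2$ carries no information about $K$. The phrase ``after a supplementary argument controlling possible clustering'' is exactly the missing idea. The paper resolves this with a two-stage argument. First it bounds the number of dyadic \emph{sectors} $Q_j\subset E_m$ that meet $\{z_n\}$, using deterministic test functions $f_\ell=\sum_{Q_j\in G_\ell}k_j$ over sectors in residue classes $j\equiv\ell\pmod{10}$, which are automatically separated; this step is close to your randomized construction but needs no probability. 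Second---and this is the piece you are missing---it bounds the number $N_j$ of zeros inside a single sector via a transfer principle of Cohn: from the representation $f(z)=B(z)\,z^{-1}\,\overline{h(1/\bar z)}$ with $h\in(BH^2)^\perp$ one deduces that \eqref{eq1.1} forces
\[
|\{e^{i\theta}:|B'(e^{i\theta})h(e^{i\theta})|>\lambda\}|\lesssim(\|h\|_2/\lambda)^{2/3}
\]
for every $h\in(BH^2)^\perp$. Taking $h$ to be a single kernel $k_j$ and using $|B'(e^{i\theta})|\ge\sum_{z_n\in Q_j}(1-|z_n|^2)/|e^{i\theta}-z_n|^2\gtrsim N_j\,2^m$ on the arc above $Q_j$ then forces $N_j$ bounded. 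Without this $B'h$ estimate, the clustering obstruction in your argument is genuine.
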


\section{Derivatives of exponential Blaschke products}\label{section2}

This section is devoted to the proof of Theorem~\ref{theo1}.

\begin{proof}[Proof of Theorem~\ref{theo1}]
\quad

\vspace*{7pt}

\noindent {\itshape Necessity.} Let $\{z_{n}\}$ be the zeros of $B$
ordered so that $|z_{n}|\le |z_{n+1}|_{1}$, $n=1,2,\dotsc$ . Assume
$B\in H^{1}_{w}$. Then
$$
|B'(\xi)|=\sum_{n}\frac{1-|z_{n}|^{2}}{|\xi -z_{n}|^{2}},\quad \text{a.e.\ }|\xi|=1
$$
(see \cite[Corollary~3]{AC}). We will prove that $B$ is an
exponential Blaschke product by contradiction. So, assume that there
exists a sequence of integers~$n_{k}$ with $\lim\limits_{k\to\infty}
(n_{k+1}-n_{k})=\infty$ such that $2^{-k-1} < 1-|z_n| \leq 2^{-k}$
for any $n=n_k , \ldots , n_{k+1}$
Let $J_{n}$ be the arc on the unit circle centered
at~$z_{n}/|z_{n}|$ of length~$2\pi (1-|z_{n}|)$. For $\xi\in J_{n}$
we have $|\xi-z_{n}|\le (\pi+1)(1-|z_{n}|)$. Hence
$$
|B'(\xi)| \ge \frac{1}{(\pi+1)^{2}}\frac{1}{1-|z_{n_{k}}|},\quad \xi\in F_{k}
$$
where
$$
F_{k}=\bigcup^{n_{k+1}}_{n=n_{k}}J_{n}.
$$
Since $B'\in H^{1}_{w}$, there exists a constant~$c_{1}>0$,
independent of~$k$, such that $ |F_{k}| \le c_{1} |J_{n_{k}}|$.
Since $\lim (n_{k+1}-n_{k})=\infty$ and $|J_{n}|\ge |J_{n_{k}}|/2$
for any $n_{k}\le n \le n_{k+1}$, there exists $\xi_{k} \in F_{k}$
such that the set of indices $ \mathcal{N}_{k}=\{n: n_{k}\le n\le
n_{k+1},\quad \xi_{k}\in J_{n}\} $ safisfies $\#
\mathcal{N}_{k}\to\infty$ as $k\to\infty$. Pick
$m_{k}\in\mathcal{N}_{k}$.
For any $n\in \mathcal{N}_{k}$ and any $\xi\in J_{m_{k}}$ we have
$|\xi - \xi_{k}|\le 2\pi (1-|z_{m_{k}}|)\le 4\pi (1-|z_{n}|)$ and
$|\xi_{k}-z_{n}|\le (\pi+1) (1-|z_{n}|)$. Hence $|\xi-z_{n}|\le (5
\pi +1)(1-|z_{n}|)$. We deduce that for almost every $\xi \in
J_{m_{k}}$ one has
$$
|B'(\xi)|\ge\sum_{n\in\mathcal{N}_{k}} \frac{1-|z_{n}|^{2}}{|\xi-
z_{n}|^{2}}\ge \frac{1}{(5\pi+1)^{2}} \sum_{n\in\mathcal{N}_{k}}
\frac{1}{1-|z_{n}|} \ge \frac{1}{ 2(5\pi+1)^{2}}
\frac{\#\mathcal{N}_{k}}{1-|z_{m_{k}}|}.
$$
Since $|J_{m_{k}}| =2\pi (1-|z_{m_{k}}|)$, the fact that
$\#\mathcal{N}_{k}\to\infty$ as $k\to\infty$, contradicts that
$B'\in H^{1}_{w}$.

\vspace*{7pt}

The proof of the {\it sufficiency} uses the following auxiliary result.

\begin{lemma}\label{lem1}
Fix $\mu>10$. Let $\{w_{k}\}$ be a sequence of points in the unit disk ordered so that $|w_{k}|\le |w_{k+1}|$, $k=1,2,\dotsc$, satisfying
\begin{enumerate}
\item[(a)] $1-|w_{1}| \le 1/\mu$.
\item[(b)] There exists $N>0$ such that
$$
C_{0}=\sup_{k} \frac{1-|w_{k+N}|}{1-|w_{k}|}<1.
$$
\end{enumerate}
Then there exist a sequence of numbers $n_{k}$, $n_{k}\nearrow
\infty$ and a constant~$K=K(C_{0},N)$ such that
\begin{enumerate}
\item[(c)] $\sum\limits_{k} 2^{n_{k}} (1-|w_{k}|)\le K/\mu$,
\item[(d)] $\sum\limits_{k} 2^{-2n_{k}} (1-|w_{k}|)^{-1}\le \mu$.
\end{enumerate}
\end{lemma}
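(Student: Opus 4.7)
The plan is to choose $n_k$ so that $2^{n_k}$ approximates $\lambda(1-|w_k|)^{-2/3}$ for a suitable scalar $\lambda = \lambda(\mu)$. The exponent $-2/3$ is forced by H\"older's inequality: writing $d_k = 1-|w_k|$ and $a_k = 2^{n_k}$, the identity $(a_k d_k)^{2}(a_k^{-2}d_k^{-1}) = d_k$ combined with H\"older (exponents $3/2$ and $3$) gives
\[
\sum_k d_k^{1/3} \le \Bigl(\sum_k a_k d_k\Bigr)^{2/3}\Bigl(\sum_k a_k^{-2}d_k^{-1}\Bigr)^{1/3}.
\]
Matching the two target bounds $K/\mu$ and $\mu$ forces the scaling $a_k \propto d_k^{-2/3}$ with a proportionality constant of order $\sqrt{S/\mu}$, where $S := \sum_k d_k^{1/3}$.

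First I would bound $S$. Iterating (b) yields $d_{k+jN}\le C_0^{j}d_k$; splitting the sum into the $N$ residue classes modulo $N$ and using (a) to control the initial terms gives
\[
S \le \frac{N}{1-C_0^{1/3}}\,\mu^{-1/3}.
\]
Next, I would set $\lambda = 2\sqrt{S/\mu}$ and define $n_k = \lfloor \log_2(\lambda d_k^{-2/3})\rfloor$, so that $\lambda d_k^{-2/3}/2 < 2^{n_k} \le \lambda d_k^{-2/3}$. Since $d_k$ is nonincreasing and tends to $0$, the integers $n_k$ are nondecreasing and go to infinity. The upper bound on $2^{n_k}$ yields (c):
\[
\sum_k 2^{n_k}d_k \le \lambda S = 2 S^{3/2}\mu^{-1/2} \le 2\Bigl(\frac{N}{1-C_0^{1/3}}\Bigr)^{3/2}\mu^{-1},
\]
so $K = 2(N/(1-C_0^{1/3}))^{3/2}$ suffices, while the lower bound on $2^{n_k}$ yields (d):
\[
\sum_k 2^{-2n_k}d_k^{-1} \le 4\lambda^{-2}S = \mu.
\]

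The main delicacy is that (d) must close with the bare constant $\mu$ on the right, not $K'(N,C_0)\mu$. The precise choice $\lambda = 2\sqrt{S/\mu}$ is what makes $4\lambda^{-2}S$ collapse exactly to $\mu$, and the extra factor $2$ inside $\lambda$ absorbs the factor $4$ from the rounding loss in replacing the real number $\lambda d_k^{-2/3}$ by an integer power of $2$. All dependence on $N$ and $C_0$ is thereby shunted into $K$ on the side of (c), as the lemma requires.
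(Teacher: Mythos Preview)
Your proof is correct. Both you and the paper follow the same high-level strategy---choose $n_k$ so that (d) closes with the exact constant $\mu$ and then verify (c) using the geometric decay from (b)---but the specific choice of $n_k$ differs. The paper sets $2^{2n_k}(1-|w_k|)=100k^2/\mu$, so that (d) becomes $\sum_k \mu/(100k^2)<\mu$ by the Basel sum, and (c) reduces to bounding $\sum_k k(1-|w_k|)^{1/2}$. Your choice $2^{n_k}\approx \lambda(1-|w_k|)^{-2/3}$, $\lambda=2\sqrt{S/\mu}$, is the H\"older-optimal scaling and reduces \emph{both} (c) and (d) to the single quantity $S=\sum_k(1-|w_k|)^{1/3}$; this is a bit cleaner and makes transparent why the lemma is sharp up to constants. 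Two minor remarks: the lemma only asks for real numbers $n_k$, so the floor is unnecessary (the paper's $n_k$ are not integers); and your $n_k$ are merely nondecreasing rather than strictly increasing when consecutive $|w_k|$ coincide, but that is all the symbol $\nearrow$ requires and all the application in the paper needs.
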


\begin{proofwqed}[Proof of Lemma~\ref{lem1}]
Choose $n_{k}$ satisfying
$$
\frac{1}{2^{2n_{k}}(1-|w_{k}|)} =\frac{\mu}{100 k^{2}}.
$$
Thus (d) holds. Since $2^{2n_{k}}=100k^{2}/\mu (1-|w_{k}|)$, we have
$$
\sum_{k} 2^{n_{k}} (1-|w_{k}|)=\frac{10}{\mu^{1/2}} \sum_{k} k (1-|w_{k}|)^{1/2}.
$$
Condition~(b) gives that $\{w_{k}\}$ may be split into at most
$N$~geometric progressions where, by~(a), first term is smaller
than~$1/\mu $. Hence
$$
\sum_{k} k(1-|w_{k}|)^{1/2}\le \frac{K(C_{0},N)}{\mu^{1/2}}.\qedif
$$
\end{proofwqed}

We now prove the converse direction in Theorem~\ref{theo1}. So let
$B$ be an exponential Blaschke product. The result of D.~Protas
mentioned at the Introduction  (\cite{P1}) gives that $B'\in H^{p}$
for any~$p<1$. Moreover Theorem~\ref{theo2} of~\cite{AC} gives
$$
|B'(\xi)|=\sum_{n}\frac{1-|z_{n}|^{2}}{|\xi-z_{n}|^{2}},\quad \text{a.e.\ }\xi\in \partial\mathbb{D}.
$$
Here $\{z_{n}\}$ is the sequence of zeros of $B$ ordered so that $|z_{n}|\le |z_{n+1}|$, $n=1,2,\dotsc$ . So, it will suffice to show that there exists a constant~$C>0$ such that for any $\lambda>0$ one has
\begin{equation}\label{eq2.1}
\left|\left\{ e^{i\theta}: \sum_{n}\frac{1-|z_{n}|^{2}}{|e^{i\theta}-z_{n}|^{2}}>\lambda\right\} \right| \le \frac{C}{\lambda}.
\end{equation}

Fix $\lambda>0$ and consider the set
$E=E(\lambda)=\{k\in\mathbb{N}:|z_{k}|<1-M\lambda^{-1}\}$, where
$M=M(\{z_{n}\})$ is a number depending on the sequence~$\{z_{n}\}$
which will be fixed later. Since $|\xi-z_{k}|\ge 1-|z_{k}|$ for any
$\xi\in \partial\mathbb{D}$, we have
$$
\sum_{k\in E} \frac{1-|z_{k}|^{2}}{|\xi-z_{k}|^{2}}\le 2\sum_{k\in E} \frac{1}{1-|z_{k}|}.
$$
Since $B$ is an exponential Blaschke product, there exist a constant
$N=N(B)$ such that for any $l=1, 2, \ldots$, the number of points in
$\{z_{n}\}$ with $2^{-l} \leq 1-|z_n| \leq 2^{-l+1}$ is smaller than
$N$. So
$$
\sum_{k\in E} \frac{1}{1-|z_{k}|} \le  \frac{ N\lambda}{M}.
$$
Choose $M=M (\{z_{n}\})=4N$ to deduce that for any
$\xi\in\partial\mathbb{D}$ one has
\begin{equation}\label{eq2.2}
\sum_{k\in E} \frac{1-|z_{k}|^{2}}{|\xi -z_{k}|^{2}}\le\frac{\lambda}{2}.
\end{equation}
Apply Lemma~\ref{lem1} to the sequence~$\{w_{k}\}=\{z_{k}: k\notin
E\}$ and the parameter~$\mu=\lambda/4N$ to get numbers~$\{n_{k}\}$
satisfying (c) and (d). As before, let  $J_k$ be the arc on the unit
circle centered at $z_k / |z_k|$ of length $2 \pi (1- |z_k|)$.
Observe that
\begin{equation}\label{eq2.3}
\left\{ e^{i\theta}:\sum_{k\notin E}
\frac{1-|z_{k}|^{2}}{|e^{i\theta}-z_{k}|^{2}}
>\frac{\lambda}{2}\right\} \subseteq \bigcup_{k\notin E} N^{-1/2} 2^{n_{k}}
J_k .
\end{equation}
Actually, if $e^{i\theta} \notin N^{-1/2} 2^{n_{k}}I(z_{k})$ one has
$|e^{i\theta} -z_{k}|\ge N^{-1/2} 2^{n_{k}} (1-|z_{k}|)$ and it
follows that
$$
\sum_{k\notin E} \frac{1-|z_{k}|^{2}}{|e^{i\theta}-z_{k}|^{2}} \le
2N \sum_{k\notin E} \frac{1}{2^{2n_{k}}(1-|z_{k}|)}
$$
which by (d) is bounded by  $\lambda/2$. So, \eqref{eq2.3} holds. Now observe that (c) gives that
$$
\left| \bigcup_{k\notin E} 2^{n_{k}} J_k \right| \le 2 \pi
\sum_{k\notin E} 2^{n_{k}} (1-|z_{k}|) \le 2 \pi
\frac{4KN}{\lambda}.
$$
Hence, applying \eqref{eq2.2} and \eqref{eq2.3} we deduce \eqref{eq2.1} and the proof is completed.
\end{proof}

It is worth mentioning that there exists no infinite Blaschke product~$B$ with $B'\in H^{1}_{w}$ such that
$$
\lim_{\lambda\to\infty} \lambda |\{ e^{i\theta}:|B' (e^{i\theta})|
>\lambda\}|=0.
$$
Actually if $\{z_{n}\}$ are the zeros of~$B$, since
$$
|B' (e^{i\theta})|=\sum_{n} \frac{1-|z_{n}|^{2}} {|e^{i\theta} -z_{n}|^{2}} \quad\text{a.e.\ } e^{i\theta} \in\partial\mathbb{D},
$$
one deduces $|B' (e^{i\theta})| \ge 1/4(1-|z_{n}|)$ for any $e^{i\theta}\in \partial\mathbb{D}$ with $|e^{i\theta}-z_{n}| \le 2 (1-|z_{n}|)$.

\section{A Frostman type result}\label{section3}

This section is devoted to the proof of Theorem~\ref{theo2}.

Let $B$ be a Blaschke with zeros~$\{z_{n}\}$. Fix $0<r<1$. Recall the following classical calculation,
\begin{equation*}
\begin{split}
\frac{1}{2\pi} \int_{0}^{2\pi} \log |B (re^{i\theta})| \,d\theta&= \sum_{n} \frac{1}{2\pi} \int^{2\pi}_{0} \log \left| \frac{re^{i\theta}-z_{n}}{1-\overline{z}_{n} re^{i\theta}}\right|\,d\theta\\*[7pt]
&=(\log r)\# \{z_{n}: |z_{n}| \le r\}+ \sum_{n:|z_{n}|\ge r} \log |z_{n}|.
\end{split}
\end{equation*}
Using the notation
$$
T(r)=\frac{1}{\log r} \frac{1}{2\pi} \int^{2\pi}_{0} \log |B(re^{i\theta})|\,d\theta,
$$
we have
\begin{equation}\label{eq3.1}
T(r)=\# \{z_{n}: |z_{n}| \le r\} +\frac{1}{\log r} \sum\limits_{n:
|z_{n}|\ge r}\log |z_{n}|.
\end{equation}
Observe that $\sup\{T(r): r\in [0,1]\}<\infty$ if and only if $B$ is a finite Blaschke product.

\begin{proof}[Proof of Theorem~\ref{theo2}]
Assume that $B$ is an exponential Blaschke product. We will use the
decomposition of $T(r)$ given in~\eqref{eq3.1}. Observe that there
exists a constant~$C>0$ such that for $1/2\le r <1$, one has
$$
\frac{1}{\log r} \sum\limits_{n: |z_{n}|\ge r}\log |z_{n}| \le C
\frac{1}{1- r} \sum\limits_{n: |z_{n}|\ge r}1- |z_{n}| .
$$
Since $B$ is an exponential Blaschke product, its zeros~$\{z_{n}\}$,
ordered so that $|z_{n}|\le |z_{n+1}|$, $n=1,2,\dotsc$,  satisfy
\begin{equation}\label{eq3.2}
\sup_{n}\frac{1-|z_{n+K}|}{1-|z_{n}|} =\alpha<1
\end{equation}
for a certain fixed integer $K>0$. Thus
$$
\frac{1}{1- r} \sum\limits_{n: |z_{n}|\ge r} 1- |z_{n}| \le
\frac{K}{1-\alpha},\quad 0<r<1.
$$
Therefore
$$
\frac{1}{\log r} \sum\limits_{n: |z_{n}|\ge r}\log |z_{n}| \le
\frac{CK}{1-\alpha}
$$
for any $1/2\le r <1$.
So, we deduce that
$$
|T (1-2^{-N-1})-T(1-2^{-N})|\le \# \{ z_{n}: 1-2^{-N}\le |z_{n}| \le
1-2^{-N-1}\} +\frac{2CK} {1-\alpha}
$$
for any $N=1,2,\dotsc$, which is uniformly bounded because $B$ is an
exponential Blaschke product.

Now let us show the converse. Let $\{z_{n}\}$ be the zeros of the Blaschke product~$B$. Given $1/2<r<1$, choose $r_{1}$ such that $\log r_{1}^{-1}=(\log r^{-1})/2$. Using \eqref{eq3.1} one has
\begin{equation*}
\begin{split}
T(r_{1})-T(r) &= \# \{z_{n}:r \le |z_{n}|\le r_{1}\}
 + \frac{2\sum\limits_{n:|z_{n}|\ge r_{1}}\log |z_{n}|^{-1}- \sum\limits_{n:|z_{n}|\ge r}\log |z_{n}|^{-1}}{\log r^{-1}}\\*[7pt]
&=\# \{z_{n}: r\le |z_{n}|\le r_{1}\}-\frac{\sum\limits_{n: r_{1}\ge |z_{n}|\ge r}\log |z_{n}|^{-1}}{\log r^{-1}}
 +\frac{\sum\limits_{n:  |z_{n}|\ge r_{1}}\log |z_{n}|^{-1}}{\log r^{-1}}.
\end{split}
\end{equation*}
Since
$$
\sum_{r_{1}\ge |z_{n}| \ge r}\log |z_{n}|^{-1} \le (\log r^{-1})\# \{z_{n}: r\le |z_{n}|\le r_{1}\}
$$
we have
$$
\#\{ z_{n}: r\le |z_{n}| \le r_{1}\} -\frac{\sum\limits_{r_{1}\ge  |z_{n}|\ge r}\log |z_{n}|^{-1}}{\log r^{-1}}\ge 0.
$$
Hence
$$
T(r_{1})-T(r)\ge \frac{\sum\limits_{n:  |z_{n}|\ge r_{1}}\log |z_{n}|^{-1}}{\log r^{-1}}.
$$
The estimate in the hypothesis and our choide of $r_{1}$ gives that there exists a constant~$C>0$ independent or $r$ such that $|T(r_{1})-T(r)| \le C$. We deduce that
$$
\frac{\sum\limits_{n:  |z_{n}|\ge r_{1}}\log |z_{n}|^{-1}}{\log r^{-1}}\le C .
$$
Pick $r_{2}$ such that $\log r_{2}^{-1} =(\log r^{-1})/8$ and observe
$$
\# \{ z_{n}: r_{1}\le |z_{n}|\le r_{2}\} \le 8 \frac{\sum\limits_{n: r_{2}\ge |z_{n}|\ge r_{1}}\log |z_{n}|^{-1}}{\log r^{-1}}\le 8C.
$$
Now, given $N >0$ pick $r= (1-2^{-N})^{2}$. Then $r_{1}=1-2^{-N}$
and $r_{2}=(1-2^{-N})^{1/4}$. Since $r_{2}\ge 1-2^{-N-1}$ if $N$~is
large enough, we deduce that
$$
\# \{z_{n}: 1-2^{-N}\le |z_{n}| \le 1-2^{-N-1} \} \le \# \{z_{n}:
r_{1}\le |z_{n}| \le r_{2} \}\le 8C.
$$
Applying Theorem~\ref{theo1} we deduce that $B'\in H^{1}_{w}$.
\end{proof}

\section{Derivatives of functions orthogonal to invariant subspaces}\label{section4}

This section is devoted to the proof of Theorem~\ref{theo3}.

\begin{proof}
Assume $B'\in H^{1}_{w}$. Let $\{z_{k}\}$ be the zeros of $B$
ordered so that $|z_{k}|\le |z_{k+1}|$, $k=1,2,\dotsc$ . According
to Theorem~\ref{theo1}, there exists an integer $N>0$ such that
\begin{equation}\label{eq4.1}
\sup_{k}\frac{1-|z_{k+N}|} {1-|z_{k}|} <1
\end{equation}
So, $\{z_{k}\}$ can be split into a finite union
$\{z_{k}\}=\Lambda_{1}\cup \dotsb \cup \Lambda_{m}$, $m=m(N)$, of
sequences~$\Lambda_{j}$ satisfying
$$
\sup_{k: z_{k}\in \Lambda_{j}}
\frac{1-|z_{k+1}|}{1-|z_{k}|}<\frac{1}{2}, \quad \text{for any
}j=1,\dotsc,m.
$$
Let $B_{j}$ be the Blaschke product with zeros~$\Lambda_{j}$. Since
$B=B_{1},\dotsc,B_{m}$, it is enough to prove the estimate for any
$f\in (B_{i}H^{2})^{\perp}$, $i=1,\dotsc,m$. In other words, one can
assume that $N=1$ in equation~\eqref{eq4.1}. So, assume
 \begin{equation}\label{eq4.2}
 \sup_{k} \frac{1-|z_{k+1}|} {1-|z_{k}|} <\frac{1}{2}.
 \end{equation}
 Let $f(z)=\sum\limits^{M}_{k=1} \beta_{k} (1-|z_{k}|)^{1/2}/(1-\overline{z}_{k}z)\in (BH^{2})^{\perp}$. Since \eqref{eq4.2} holds, $\{z_{k}\}$ is an interpolating
 sequence. Then there exists a constant $C=C(\{z_n\})$ such that
$$
C^{-2} \sum^{M}_{k=1}|\beta_{k}|^{2} \le \|f\|^{2}_{2} \le C^2
\sum^{M}_{k=1}|\beta_{k}|^{2}
$$
(see Theorem~B in \cite{Co}). Hence
\begin{equation*}
\begin{split}
|f'(e^{i\theta})| &\le \sum^{M}|\beta_{k}| \frac{(1-|z_{k}|)^{1/2}}{|1-\overline{z}_{k}e^{i\theta}|^{2}} \le \left(\sum^{M}_{k=1} |\beta_{k}|^{2}\right)^{1/2} \left(\sum^{M}_{k=1} \frac{1-|z_{k}|} {|1-\overline{z}_{k} e^{i\theta}|^{4}}\right)^{1/2}\\*[7pt]
&\le C\|f\|_{2} \left( \sum^{M}_{k=1} \frac{1-|z_{k}|} {|1-\overline{z}_{k}e^{i\theta}|^{4}}\right)^{1/2}
\end{split}
\end{equation*}


\noindent Fix $\lambda>0$. We have
$$
\{e^{i\theta}: |f' (e^{i\theta})| >\lambda\} \subseteq \left\{
e^{i\theta}: \sum^{M}_{k=1} \frac{1-|z_{k}|}{|1-\overline{z}_{k}
e^{i\theta}|^{4}} > \frac{\lambda^{2}}{C^{2}\|f\|_{2}^{2}} \right\}.
$$
Let $k_{0}$ be the largest integer between~$1$ and $M$ such that
$$
\frac{4}{(1-|z_{k_{0}}|)^{3}}\le \frac{\lambda^{2}}{C^{2}\|f\|^{2}_{2}}.
$$
Since we can assume $\lambda$ is large, the number~$k_{0}$ exists. Observe that by~\eqref{eq4.2},
$$
\sum^{k_{0}}_{k=1} \frac{1-|z_{k}|} {|1-\overline{z}_{k}
e^{i\theta}|^{4}} \le \sum^{k_{0}}_{k=1} \frac{1}{(1- |z_{k}|)^{3}}
\le \frac{2}{(1-|z_{k_{0}}|)^{3}}.
$$
Hence
$$
\{e^{i\theta} :| f' (e^{i\theta})| >\lambda\} \subseteq \left\{ e^{i\theta} :\sum^{M}_{k_{0}+1} \frac{1-|z_{k}|} {|1-\overline{z}_{k} e^{i\theta}|^{4}} >\frac{\lambda^{2}}{2C^{2}\|f\|^{2}_{2}}\right\}.
$$
Pick $N_{k}>0$ satisfying
$$
\frac{1}{N_{k}^{4}(1-|z_{k}|)^{3}} = \frac{\lambda^{2}} {10C^{2} \|f\|^{2}_{2} (k-k_{0})^{2}}, \quad k=k_{0}+1,\dotsc,M.
$$
Let $N_{k}I_{k}$ denote the arc on the unit circle centered at~$z_{k}/|z_{k}|$ of length~$2N_{k}(1-|z_{k}|)$. We claim that
 \begin{equation}\label{eq4.3}
 \left\{ e^{i\theta}: \sum^{M}_{ k_{0}+1} \frac{1-|z_{k}|} {|1-\overline{z}_{k}e^{i\theta}|^{4}} >\frac{\lambda^{2}} {2C^{2}\|f\|^{2}_{2}}\right\} \subseteq \bigcup^{M}_{k_{0}+1} N_{k}I_{k}.
 \end{equation}
Actually if $e^{i\theta} \notin \bigcup\limits^{M}_{k_{0}+1} N_{k}I_{k}$, we have
$$
\sum^{M}_{k_{0}+1} \frac{1-|z_{k}|} {|1-\overline{z}_{k}e^{i\theta}|^{4}} \le \sum^{M}_{k_{0}+1} \frac{1}{ N_{k}^{4}(1-|z_{k}|)^{3}}\le \frac{\lambda^{2}}{2C^{2}\|f\|^{2}_{2}}.
$$
So, \eqref{eq4.3} holds. We deduce that
\begin{equation*}
\begin{split}
|\{ e^{i\theta}  : |f'(e^{i\theta})| >\lambda\} | &\le
\sum^{M}_{k_{0}+1} N_{k} (1-|z_{k}|)\\*[7pt]  &=
\frac{10^{1/4}C^{1/2} \|f\|_{2}^{1/2}}{ \lambda^{1/2}}
\sum^{M}_{k_{0}+1} (k-k_{0})^{1/2} (1-|z_{k}|)^{1/4}.
\end{split}
\end{equation*}
Now \eqref{eq4.2} and the choice of $k_{0}$ gives
\begin{equation*}
\begin{split}
\sum^{M}_{k_{0}+1} (k-k_{0})^{1/2} (1-|z_{k}|)^{1/4}&\le 10 (1-|z_{k_{0}+1}|)^{1/4}\\
&\le 10\left(\frac{4C^{2}\|f\|^{2}_{2}}{\lambda^{2}}\right)^{1/12}.
\end{split}
\end{equation*}
We deduce
$$
|\{ e^{i\theta} :|f' (e^{i\theta})| >\lambda\}| \le \frac{20C^{2/3}\|f\|_{2}^{2/3}} {\lambda^{2/3}}
$$
for any $f$ which is a finite linear combination of $(1-|z_{k}|)^{1/2}/ (1-\overline{z}_{k}z)$. Since these
 functions are dense in $(BH^{2})^{\perp}$ we deduce that \eqref{eq1.1} holds.

\vspace*{7pt}

\noindent Let us now prove the converse. Let $\{z_{n}\}$ be the
sequence of zeros of~$B$. For $m=1,2,\dotsc$, let $E_{m} $ be the
annuli~$E_{m}= \{z: 1-2^{-m}\le |z|\le 1-2^{-m-1}\}$. We will show
that there exists a constant~$K>0$ such that $\# \{z_{n}: z_{n}\in
E_{m}\} \le K$ for any $m=1,2,\dotsc$ . Then, according to
Theorem~\ref{theo1}, it would follow that $B'\in H^{1}_{w}$. Fix $m
\geq 1$. Split $E_{m}$ into $2^{m}$ truncated sectors~$Q_{j}$, that
is, $E_{m}=\bigcup\limits^{2^{m}-1}_{j=0} Q_{j}$, where
$$
Q_{j}=\{ z= re^{i\theta} \in E_{m}: |\theta-2\pi j 2^{-m}| <\pi 2^{-m}\},\quad j=0,\dotsc, 2^{m}-1.
$$
The proof is organized in two steps. First we will show that there
exists at most a fixed number (independent of~$m$) of
sectors~$Q_{j}$ which contain a point of the sequence~$\{z_{n}\}$.
Second, we will show that each~$Q_{j}$ can contain at most a fixed
number (independent of~$j$ and~$m$) of points of the
sequence~$\{z_{n}\}$.

Let us group the sectors~$\{Q_{j}\}$ into ten families~$G_{\ell}$, $\ell=1,\dotsc,10$, defined as $G_{\ell}=\{Q_{j}\}_{j}$ where the index~$j$ runs over all indices~$j$ such that $j=\ell$ (mod.~$10$), that is, $j=\ell+k10$ for a certain integer~$k$. See Figure~\ref{fig1}.
\begin{figure}[h]
\begin{center}
\epsfig{file=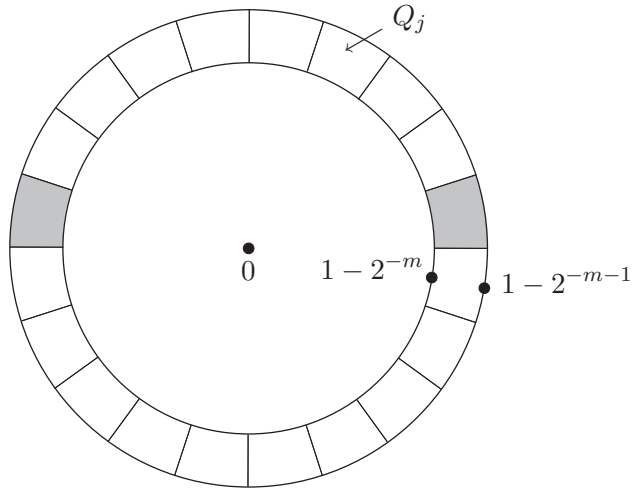} \\*[-5pt]
\caption{$G_{1}$ consists of the two shadowed sectors.}\label{fig1}
\end{center}
\end{figure}

For each sector~$Q_{j}$ with $Q_{j}\cap \{z_{n}\}\ne \varnothing$, pick a point in $Q_{j}\cap \{z_{n}\}$ and name it $z_{j}\in Q_{j}$. Fix $\ell=1,\dotsc,10$ and consider the function
$$
f_{\ell} (z)=f_{\ell,m} (z)=\sum_{j: Q_{j}\in G_{\ell}}
\frac{(1-|z_{j}|^{2})^{1/2}}{1-\overline{z}_{j}z}.
$$
Fix $z_{k} \in Q_{k}\in G_{\ell}$. If $z= e^{i\theta}$,
$|\theta-2\pi k 2^{-m}| < \pi 2^{-m}$, we have $|e^{i\theta} - z_k|
\leq 5 2^{-m}$ and $|e^{i\theta} - z_j| > 8 \pi 2^{-m}$ if $j\ne k$.
Thus
 \begin{equation}\label{eq4.4}
 \begin{split}
 |f'_{\ell}(z)|&\ge \frac{(1-|z_{k}|)^{1/2}|\overline{z}_{k}|} {|1-\overline{z}_{k}z|^{2}}-\sum_{j\ne k} \frac{2(1-|z_{j}|)^{1/2}} {|1-\overline{z}_{j}z|^{2}}\\*[7pt]
 & \ge \frac{2^{-m/2}1/2}{25\; 2^{-2m}} -\sum_{i=1} \frac{2\;2^{-m/2}}{( 8 \pi 2^{-m} i)^{2}} \ge \frac{2^{3m/2}}{100}.
 \end{split}
 \end{equation}

\noindent
Let $H_{\ell}$ be the subfamily of $G_{\ell}$ consisting
of these sectors~$Q_{j}\in G_{\ell}$ with $\{z_{n}\}\cap
Q_{j}\ne\varnothing$. Estimate~\eqref{eq4.4} gives that
\begin{equation}\label{eq4.5}
 \left \{ e^{i\theta} : |f'_{\ell} (e^{i\theta})| >\frac{2^{3m/2}}{10}\right\} \supseteq \bigcup \{ e^{i\theta}: |\theta-2\pi j 2^{-m}|< \pi 2^{-m}\}
 \end{equation}
 where the union is taken over all $j=0,\dotsc,2^{m}-1$ such that $Q_{j}\in H_{\ell}$.
Since \eqref{eq1.1} holds, there exists a constant~$C>0$ such that
\begin{equation}\label{eq4.6}
|\{ e^{i\theta} : |f'_{\ell} (e^{i\theta})| >\lambda\}| \le C\lambda^{-2/3} \|f_{\ell}\|_{2}^{2/3}
\end{equation}
for any $\lambda>0$. Since the points~$\{z_{j}\}$ which appear in
the definition of $f_{\ell}$ form an interpolating sequence with
fixed constants (independent of~$\ell$ and~$m$), we have that
$\|f_{\ell}\|^{2}_{2}$ is comparable to~$\# H_{\ell}$ (see Theorem~B
in \cite{Co}). Taking $\lambda=2^{3m/2} /100$ in~\eqref{eq4.6} and
applying \eqref{eq4.5} we get
$$
2^{-m}\# H_{\ell} \le C_{1} 2^{-m} (\# H_{\ell})^{1/3}.
$$
Hence $\# H_{\ell}\le C_{1}^{3/2}$. Adding over $\ell=1,\dotsc,10$
we deduce $\# \{Q_{j}: Q_{j}\cap \{z_{n}\}\ne\varnothing\} \le 10
C_{1}^{3/2}$.

Let $N_{j}$ be the number of points of $\{z_{n}\}$ contained in
$Q_{j}$. Next we will show that there exists a constant~$K>0$,
independent of~$j$ and~$m$, such that $N_{j}\le K$. Fix $p<1$ and
observe that \eqref{eq1.1} gives that $h' \in H^{2/3}_w \subset
H^{2p/(p+2)}$ for any $h \in (BH^{2})^{\perp}$. So the result of
Cohn (\cite{Co})gives that $B' \in H^p$. In particular $B'$ has
non-tangential limits at almost every point of the unit circle. We
will use an idea of W.~Cohn (see the proof of Theorem~2
in~\cite{Co}) which we collect in the following statement.

\begin{claim}\label{claim}
Assume \eqref{eq1.1} holds. Then for any $h\in (BH^{2})^{\perp}$ and
any $\lambda>0$ one has
$$
|\{ e^{i\theta}: |B' (e^{i\theta}) h(e^{i\theta})|>\lambda\}| \le 3C\left(\frac{\|h\|_{2}}{\lambda}\right)^{2/3}.
$$
\end{claim}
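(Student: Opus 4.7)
The plan is to derive a pointwise inequality of the form $|B'h|\le|h|+|h'|+|g'|$ on $\partial\mathbb{D}$ for a suitable companion $g$, and then estimate the three resulting level sets by \eqref{eq1.1} together with Chebyshev's inequality. The companion $g$ comes from the standard isometric involution on $(BH^{2})^{\perp}$: given $h\in (BH^{2})^{\perp}$, define $g$ on $\partial\mathbb{D}$ by $g(e^{i\theta})=e^{-i\theta}B(e^{i\theta})\overline{h(e^{i\theta})}$. A routine calculation, using that $h\perp BH^{2}$, shows that $g\in(BH^{2})^{\perp}$ with $\|g\|_{2}=\|h\|_{2}$ and $|g|=|h|$ a.e.\ on the boundary, which is equivalent to the boundary identity
\[
e^{i\theta}h(e^{i\theta})=B(e^{i\theta})\overline{g(e^{i\theta})}\quad\text{a.e.\ on }\partial\mathbb{D}.
\]

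The key step is to differentiate this identity in the angular variable~$\theta$. Using $\partial_{\theta}F(e^{i\theta})=ie^{i\theta}F'(e^{i\theta})$ for $F$ analytic in the disc, applying the product rule on each side and dividing by~$ie^{i\theta}$, one obtains
\[
h(e^{i\theta})+e^{i\theta}h'(e^{i\theta})=B'(e^{i\theta})\overline{g(e^{i\theta})}-e^{-2i\theta}B(e^{i\theta})\overline{g'(e^{i\theta})}
\]
almost everywhere. Since $|B|=1$ and $|g|=|h|$ on the boundary, so that $|B'(e^{i\theta})h(e^{i\theta})|=|B'(e^{i\theta})\overline{g(e^{i\theta})}|$, isolating the $B'\overline{g}$ term and applying the triangle inequality yields the pointwise estimate $|B'(e^{i\theta})h(e^{i\theta})|\le|h(e^{i\theta})|+|h'(e^{i\theta})|+|g'(e^{i\theta})|$ a.e.

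The claim now reduces to estimating the three level sets at height~$\lambda/3$. Since both $h$ and $g$ lie in $(BH^{2})^{\perp}$ with $\|g\|_{2}=\|h\|_{2}$, the hypothesis~\eqref{eq1.1} bounds $|\{|h'|>\lambda/3\}|$ and $|\{|g'|>\lambda/3\}|$ each by a constant multiple of $(\|h\|_{2}/\lambda)^{2/3}$. For $\{|h|>\lambda/3\}$, Chebyshev's inequality gives the weak~$L^{2}$ bound $9\|h\|_{2}^{2}/\lambda^{2}$; combined with the trivial bound~$2\pi$ and split according to whether $\lambda\lesssim\|h\|_{2}$ or $\lambda\gtrsim\|h\|_{2}$, this too is dominated by a constant multiple of $(\|h\|_{2}/\lambda)^{2/3}$. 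Summing the three contributions and collecting numerical factors gives the claim in the stated form. The main technical point meriting care is the justification of the boundary differentiation, which requires a.e.\ existence of non-tangential boundary values of $B'$, $h'$ and $g'$; this is guaranteed because under~\eqref{eq1.1} one has $h',g'\in H^{2/3}_{w}$, and applying~\eqref{eq1.1} to a reproducing kernel of $(BH^{2})^{\perp}$ (together with Cohn's theorem from the introduction) gives $B'\in H^{p}$ for every~$p<1$, which suffices.
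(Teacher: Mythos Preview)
Your argument is correct and is essentially the same as the paper's: your companion $g(e^{i\theta})=e^{-i\theta}B(e^{i\theta})\overline{h(e^{i\theta})}$ is exactly the function $f$ in the paper's representation $f(z)=B(z)\,z^{-1}\,\overline{h(1/\overline{z})}$, and both proofs differentiate this identity to obtain the pointwise bound $|B'h|\le|h|+|h'|+|g'|$ (resp.\ $|f'|$) before applying~\eqref{eq1.1} and Chebyshev. The only cosmetic difference is that the paper differentiates the analytic extension in~$z$ rather than in the angular variable~$\theta$, which slightly streamlines the justification of the boundary differentiation you flag at the end.
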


\begin{proof}[Proof of the Claim]
It is well known that any $f\in (BH^{2})^{\perp}$ can be written as
\begin{equation}\label{eq4.7}
f(z)= B(z)\frac{1}{z}\,
\overline{h\left(\frac{1}{\overline{z}}\right)},\quad z\in
\mathbb{C} \setminus \overline{\{z_{k}\}\cup
\left\{\frac{1}{\overline{z}_{k}}\right\}}
\end{equation}
where $h\in (BH^{2})^{\perp}$. Hence
$$
f'(z)= B'(z) \frac{1}{z} \overline{ h\left(
\frac{1}{\overline{z}}\right) } -B(z) \frac{1}{z^{2}}
\,\overline{h\left(\frac{1}{\overline{z}}\right)} -B(z)
\frac{1}{z^{3}} \, \overline{h'\left(\frac{1}{\overline{z}}\right)}.
$$
Since $h\in (BH^{2})^{\perp}\subset H_{w}^{2/3}$, \eqref{eq1.1}
gives that
$$
|\{ e^{i\theta}: |B' (e^{i\theta})h (e^{i\theta})|>\lambda\}| \le 3C \left( \frac{\|h\|_{2}}{\lambda}\right)^{2/3}.
$$
Since the functions~$h$ which arise from functions~$f$ in~\eqref{eq4.7} are dense in $(BH^{2})^{\perp}$, the Claim is proved.
\end{proof}

Fix a sector~$Q_{j}$ of the form $ Q_{j}=\{ re^{i\theta} :
1-2^{-m}\le r<1-2^{-m-1},\, |\theta-2\pi j2^{-m}|<\pi 2^{-m}\} $ and
recall that $N_{j}=\# \{n: z_{n}\in Q_{j}\}$. Pick a point $z_{j}\in
Q_{j}\cap \{z_{n}\}$ and consider the function
$$
h(z) =\frac{(1-|z_{j}|^{2})^{1/2}}{1-\overline{z}_{j}z}.
$$
Note that $h\in (BH^{2})^{\perp}$, $\|h\|_{2}=1$ and $|h
(e^{i\theta})| >2^{m/2} / 10$ if $|\theta-2\pi j 2^{-m}| < \pi
2^{-m}$. Also, if $|\theta-2\pi j 2^{-m}|< \pi 2^{-m}$, we have
$$
e^{i\theta} \frac{B' (e^{i\theta})}{B(e^{i\theta})} =\sum_{n}
\frac{1-|z_{n}|^{2}}{|e^{i\theta}-z_{n}|^{2}}\ge \sum_{n: z_{n}\in
Q_{j}} \frac{1-|z_{n}|^{2}}{|e^{i\theta}-z_{n}|^{2}}\ge
\frac{2^{m}N_{j}}{50}.
$$
Choose $\lambda=2^{\frac{3m}{2}} N_{j} /500 $ to obtain
$$
\{ e^{i\theta}: |\theta-2\pi j 2^{-m}| <\pi 2^{-m} \} \subseteq
\{e^{i\theta}:|B' (e^{i\theta})h(e^{i\theta})|>\lambda\}.
$$
Applying the Claim we deduce
$$
\pi 2^{-m} \le
C\left(\frac{500}{N_{j}2^{\frac{3m}{2}}}\right)^{2/3}.
$$
Hence
$$
N_{j}\le 500 C^{3/2}.
$$
This finishes the proof.
\end{proof}

\begin{obs}
The only Blaschke products~$B$ for which $f'\in H^{2/3}$ for any $f\in (BH^{2})^{\perp}$ are the finite ones.
\end{obs}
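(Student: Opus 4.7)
To prove the observation I would argue by contradiction, assuming $B$ is an infinite Blaschke product such that $f'\in H^{2/3}$ for every $f\in (BH^{2})^{\perp}$. Since $H^{2/3}\subset H^{2/3}_{w}$, the hypothesis combined with Theorem~\ref{theo3} yields $B'\in H^{1}_{w}$, and Theorem~\ref{theo1} then forces $B$ to be an exponential Blaschke product. Its zeros $\{z_{n}\}$ (ordered by modulus) therefore satisfy $1-|z_{n+K}|\le \alpha(1-|z_{n}|)$ for some $K\ge 1$ and some $\alpha<1$, so setting $w_{k}=z_{kK}$ produces an interpolating subsequence with geometric decay $1-|w_{k+1}|\le \alpha(1-|w_{k}|)$ and infinitely many terms.

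The plan is next to upgrade the pointwise hypothesis into a uniform estimate via the closed graph theorem. Define the linear map $D\colon (BH^{2})^{\perp}\to H^{2/3}$ by $Df=f'$; this is well defined by assumption, and its graph is closed because both $H^{2}$ and $H^{2/3}$ convergence force locally uniform convergence in $\mathbb{D}$. Since $(BH^{2})^{\perp}$ is a Hilbert space and $H^{2/3}$ is a complete metrizable (quasi-Banach) $F$-space, the closed graph theorem furnishes a constant $C>0$ with
\[
\|f'\|_{H^{2/3}}^{2/3}\le C\,\|f\|_{2}^{2/3}\qquad\text{for every }f\in (BH^{2})^{\perp}.
\]

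I would then test this bound on random linear combinations of normalized reproducing kernels. Let $h_{k}(z)=(1-|w_{k}|^{2})^{1/2}/(1-\overline{w}_{k}z)\in (BH^{2})^{\perp}$, and for Rademacher signs $\epsilon_{k}=\pm 1$ set $f_{N}=\sum_{k=1}^{N}\epsilon_{k}h_{k}$. The interpolating property of $\{w_{k}\}$ yields $\|f_{N}\|_{2}^{2}\asymp N$ uniformly in the choice of signs, hence $\|f_{N}\|_{2}^{2/3}\asymp N^{1/3}$. Applying Khintchine's inequality pointwise (which holds for complex coefficients at any exponent),
\[
\mathbb{E}_{\epsilon}\,|f_{N}'(e^{i\theta})|^{2/3}\gtrsim \Bigl(\sum_{k=1}^{N}|h_{k}'(e^{i\theta})|^{2}\Bigr)^{1/3}\ge |h_{k}'(e^{i\theta})|^{2/3}
\]
for each $k$. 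On the arc $J_{k}$ of length $\sim 1-|w_{k}|$ centered at $w_{k}/|w_{k}|$ one has $|h_{k}'|^{2/3}\sim (1-|w_{k}|)^{-1}$, whence $\int_{J_{k}}|h_{k}'|^{2/3}\,d\theta\gtrsim 1$. Since $\{w_{k}\}$ is interpolating the arcs $J_{k}$ can be chosen pairwise disjoint, and summing over $k=1,\dots,N$ yields $\mathbb{E}_{\epsilon}\|f_{N}'\|_{H^{2/3}}^{2/3}\gtrsim N$.

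Consequently some realization of the signs satisfies $\|f_{N}'\|_{H^{2/3}}^{2/3}\gtrsim N$; combining with the closed graph estimate produces $N\lesssim \|f_{N}'\|_{H^{2/3}}^{2/3}\le C\,\|f_{N}\|_{2}^{2/3}\asymp N^{1/3}$, which is impossible for large $N$. Hence $B$ has only finitely many zeros. The main obstacle lies in the geometric lower bound feeding the Khintchine step: one must verify the complex Khintchine inequality pointwise together with the disjointness of the arcs $J_{k}$ and the size estimate for $|h_{k}'|^{2/3}$ on them; both reduce to standard facts about interpolating sequences but deserve explicit care.
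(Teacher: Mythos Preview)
Your approach is essentially correct and genuinely different from the paper's. The paper also argues by contradiction, but instead of a closed-graph/Khintchine argument it uses the representation $f(z)=B(z)\,z^{-1}\,\overline{h(1/\overline{z})}$ of functions in $(BH^{2})^{\perp}$ to deduce that $B'h\in H^{2/3}$ for every $h\in (BH^{2})^{\perp}$; it then evaluates on an interpolating subsequence $\{z_n\}$ of the zeros via the Carleson embedding $\sum_n |g(z_n)|^{2/3}(1-|z_n|)\lesssim \|g\|_{H^{2/3}}^{2/3}$ together with a choice of $h$ interpolating values $\{w_n\}$ satisfying $\sum|w_n|^{2}(1-|z_n|)<\infty$ but $\sum|w_n|^{2/3}(1-|z_n|)^{1/3}=\infty$. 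Your route replaces this structural identity by the closed graph theorem in the $F$-space $H^{2/3}$ and a random-sign averaging; it is more elementary in that it uses nothing about $(BH^{2})^{\perp}$ beyond the fact that it contains the reproducing kernels at the zeros.

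Two points need repair. First, your appeal to Theorem~\ref{theo3} in the opening paragraph is premature: that theorem requires the \emph{uniform} weak-type bound~(\ref{eq1.1}), which you only obtain after the closed-graph step. The detour is in any case unnecessary---from the zeros of any infinite Blaschke product one can directly extract a subsequence $\{w_k\}$ with $1-|w_{k+1}|\le \tfrac{1}{4}(1-|w_k|)$, and such a sequence is automatically interpolating. Second, the claim that ``since $\{w_k\}$ is interpolating the arcs $J_k$ can be chosen pairwise disjoint'' is false as stated: for a radial interpolating sequence the arcs $J_k$ are all centred at the same boundary point and are nested. The fix is immediate: with $1-|w_{k+1}|\le \alpha(1-|w_k|)$ and $\alpha<\tfrac{1}{2}$, the sets $A_k=J_k\setminus\bigcup_{j>k}J_j$ are pairwise disjoint, satisfy $|A_k|\gtrsim 1-|w_k|$, and lie inside $J_k$, so one still has $\int_{A_k}|h_k'|^{2/3}\,d\theta\gtrsim 1$ and the Khintchine lower bound $\mathbb{E}_\epsilon\|f_N'\|_{H^{2/3}}^{2/3}\gtrsim N$ follows as you intended.
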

\begin{proof} We argue by contradiction. Let $B$ be an infinite Blaschke product
such that $f'\in H^{2/3}$ for any $f\in (BH^{2})^{\perp}$. Let
$\{z_{n}\}$ be the zeros of~$B$. Taking a subsequence if necessary,
we can assume that $\{z_{n}\}$ is an interpolating sequence. Arguing
as in the previous Claim, one gets that
\begin{equation}\label{eq4.8}
B'h\in H^{2/3}\text{ for any }h\in (BH^{2})^{\perp}.
\end{equation}
Pick a sequence~$\{w_{n}\}$ of complex values such that
\begin{align*}
&\sum |w_{n}|^{2} (1-|z_{n}|)<\infty,\\
&\sum |w_{n}|^{2/3} (1-|z_{n}|)^{1/3}=\infty.
\end{align*}
Since $\{z_{n}\}$ is an interpolating sequence, one can choose $h\in (BH^{2})^{\perp}$ such that $h(z_{n})=w_{n}$, $n=1,2,\dotsc$ . Since $\sum(1-|z_{n}|)\delta_{z_{n}}$ is a Carleson measure, from \eqref{eq4.8} one deduces
$$
\sum |B'(z_{n}) h(z_{n})|^{2/3} (1-|z_{n}|)<\infty.
$$
Since $\{z_{n}\}$ is an interpolating sequence, one has $\inf\limits_{n} |B'(z_{n})| (1-|z_{n}|)>0$. Hence
$$
\sum |w_{n}|^{2/3} (1-|z_{n}|)^{1/3}<\infty
$$
which gives the contradiction.
\end{proof}

\end{document}